\documentclass[11pt,letterpaper,reqno]{amsart}
\usepackage{amssymb}
\usepackage{amsmath}
\usepackage{amsthm}
\usepackage{amsfonts}
\usepackage{bbm}
\usepackage{enumitem} 
\usepackage{booktabs}
\usepackage{graphicx}
\usepackage{xcolor}
\usepackage[T1]{fontenc}
\usepackage{doi}
\usepackage{comment} 
\usepackage{hyperref}
\hypersetup{pdfstartview={FitH}}
\usepackage{bookmark}
\usepackage[capitalize,noabbrev]{cleveref}

\newtheorem{theorem}{Theorem}[section]

\theoremstyle{definition}

\begin{document}

\title{Consecutive integers free of certain prime factors}

\author{Wouter van Doorn}
\address{Groningen, the Netherlands}
\email{wonterman1@hotmail.com}

\author{Quanyu Tang}
\address{School of Mathematics and Statistics, Xi'an Jiaotong University, Xi'an 710049, P. R. China}
\email{tangquanyu827@gmail.com}

\subjclass[2020]{Primary 11N25; Secondary 11N05, 11A41.}

\keywords{Erd\H{o}s problem, consecutive integers, prime factors}

\begin{abstract}
Let $n_k$ denote the least integer $n>2k$ such that $(n-k)(n-k+1)\cdots(n-1)$ is not divisible by any prime in the interval $(k,2k)$. Confirming a conjecture of Erd\H{o}s, we prove that, for all sufficiently large $k$,
$$
n_k > e^{\frac{\log^2 k}{20 \log \log k}}.
$$
\end{abstract}

\maketitle

\section{Introduction}
In~\cite{Erdos1979} Erd\H{o}s writes\footnote{Notation slightly altered to match notation in~\cite{ErdosGraham1980}.}
\begin{quote}
\emph{Several times during my long life, I was led to questions of the following type. Estimate, as well as you can, the size of the smallest integer $n_k > 2k$ for which $\prod_{1 \le i \le k} (n - i)$ has no prime factor $p$ satisfying $k < p < 2k$. I would expect that $n_k > k^d$ for every $d$ if $k > k_0(d)$, but that $n_k < e^{\varepsilon k}$ for every $\varepsilon > 0$ if $k > k_0(\varepsilon)$. However, I could prove nothing non-trivial.}
\end{quote}

Erd\H{o}s and Graham repeated this request in~\cite{ErdosGraham1980}, and added
\begin{quote}
\emph{We can prove $n_k > k^{1+c}$ but no doubt much more is true.}
\end{quote}

Estimating $n_k$ is now recorded as Erd\H{o}s Problem \#451 on Bloom's website~\cite{ErdosProblems451}, and no proofs of non-trivial bounds have appeared in the literature, as far as the authors are aware. 

For a quick upper bound, we have $$n_k \le \prod_{\substack{k < p < 2k \\ p \text{ prime}}} p = e^{(1+o(1))k}$$ by the prime number theorem. A natural heuristic suggests that the true size of \(n_k\) is smaller, however, and is on the scale
\[
    \exp\left(\Theta\left(\frac{k}{\log k}\right)\right).
\]
The details of this heuristic, together with related numerical evidence, are discussed in~\cite{TangHeuristics}; computed values of \(n_k\) are also recorded in OEIS sequence A386620~\cite{OEISA386620}. 

In this paper we focus on lower bounds, and prove Erd\H{o}s' conjecture that $n_k$ grows superpolynomially fast, by showing that $n_k > e^{\frac{c\log^2 k}{\log \log k}}$ holds for some absolute constant $c$ and all $k > 1$. More precisely, with $\theta \in \left(\frac{2}{5}, \frac{3}{5}\right)$ a constant such that, for all sufficiently large $k$, the interval $I = I_k := (k, k + k^{\theta})$ contains $\gg_{\theta} \frac{k^{\theta}}{\log k}$ primes (which exists by e.g.~\cite{BakerHarmanPintz2001}), this paper is then dedicated to proving the following theorem.

\begin{theorem} \label{main}
For all sufficiently large integers $k$ and all $n \in \mathbb{N}$ with $2k < n \le e^{\frac{\log^2 k}{20 \log \log k}}$, the product $(n-k)\cdots(n-1)$ is divisible by some prime $p \in (k, k + 3k^{\theta})$.
\end{theorem}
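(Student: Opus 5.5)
The plan is to turn the failure of the conclusion into a system of congruences and then exploit the smoothness of $x\mapsto n/x$ on a short interval.

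\emph{Reformulation.} Fix $n$ with $2k<n\le e^{\log^2 k/(20\log\log k)}$ and suppose that no prime $p\in(k,k+3k^{\theta})$ divides $(n-k)\cdots(n-1)$. For such a $p$, divisibility fails exactly when $n \bmod p\notin[1,k]$, that is, when $n\equiv -t_p \pmod p$ for some integer $0\le t_p<p-k\le 3k^{\theta}$. So there are integers $m_p=(n+t_p)/p$ with
\[
p\,m_p-n=t_p\in[0,\,p-k).
\]
Geometrically, the integer points $(p,m_p)$ lie very close to the hyperbola $xy=n$. The aim is to show that an $n$ of the stated size cannot have this property for every prime $p\in(k,k+3k^{\theta})$. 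The hypothesis on $\theta$ guarantees $\gg_\theta k^{\theta}/\log k$ such primes. This is a supply of nodes far larger than we will need.

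\emph{Divided differences.} Choose $d\approx 2\log n/\log k$, so that $d\le \log k/(10\log\log k)$, and choose $d+1$ of these primes $p_0<\dots<p_d$. Apply the $d$-th divided difference $\Delta_d$ on the nodes $p_i$ to the identity $m_{p}=n/p+t_p/p$.
\begin{itemize}
\item On the left, $\Delta_d[m]$ is a rational number whose denominator divides $V=\prod_{i<l}(p_l-p_i)$. Hence either $\Delta_d[m]=0$ or $|\Delta_d[m]|\ge 1/V$.
\item On the right, $\Delta_d[n/x]=(-1)^d n/\prod_i p_i$, which has size about $n/k^{d+1}\le k^{-d/2}$.
\item The term $\Delta_d[t/x]$ I would expand by the Leibniz rule as $\sum_j \Delta_j[t]\,\Delta_{d-j}[1/x]$, using $|\Delta_{d-j}[1/x]|\asymp k^{-(d-j+1)}$.
\end{itemize}

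\emph{Choice of nodes.} The primes should be chosen so that $V$ is small. Pigeonholing the primes of the interval into windows of length $L\asymp d\log k$ (about $\log^2 k/\log\log k$) gives $d+1$ primes in one window. Then
\[
V\le L^{d^2/2}=\exp\bigl((1+o(1))\tfrac{d^2}{2}\log\log k\bigr)\le k^{d/20}.
\]
This is comfortably below $k^{d/2}$, and this comparison is exactly where the constant $20$ and the shape $\log^2k/\log\log k$ come from. If the error term is also $<1/(2V)$, then $\Delta_d[m]=0$. Consequently $m_p$ agrees on the chosen nodes with a polynomial $P$ of degree $<d$.

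\emph{Closing step.} Next, $t_p=pP(p)-n$ is then a polynomial of degree $\le d$ in $p$ that is small at $d+1$ (and, by repeating with overlapping windows, many more) primes. Comparing with $n/x$, I would show that all Taylor coefficients of degree $\ge 2$ vanish, which forces $P$ to be essentially the constant $c\approx n/k$. But then $t_p-t_{p'}=c(p-p')$ for two primes in the window. Since $|t_p-t_{p'}|<3k^{\theta}$ while $c(p-p')\ge n/k>3k^{\theta}$ (after first disposing of the trivial range $n\le k^{1+\theta}$ directly), this is a contradiction.

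\emph{Main obstacle.} The hard step is controlling $\Delta_d[t/x]$. The integers $t_p$ are only known to lie in $[0,3k^{\theta})$, so the naive bound on their divided differences is about $k^{\theta}$ divided by node gaps. That is far too large compared with $1/V$. The first thing I would try is to exploit the fact that the $t_p$ are integers. Clearing denominators, I would work with the integer-weighted combination
\[
\sum_i \frac{\prod_{l\ne i}p_l}{\prod_{l\ne i}(p_i-p_l)}\,(n+t_i),
\]
multiplied through by $V$. Both $n/\prod p_i$ and the $t$-contribution are then rationals with controlled denominators, and the $t$-part enters only through the single small quantity $\sum_i t_i/\bigl(p_i\prod_{l\ne i}(p_i-p_l)\bigr)$. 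I would then choose the window and $d$ so that this combination is an integer of absolute value $<1$ and hence $0$. This is the step where I expect the extra room in $(k,k+3k^{\theta})$, as opposed to $(k,k+k^{\theta})$, to be used, and where the parameter bookkeeping is most delicate.
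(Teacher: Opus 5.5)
\textbf{There is a genuine gap: the $t$-error cannot be made smaller than $1/V$, and the closing step does not hold for large $n$.} Your reformulation $pm_p-n=t_p\in[0,p-k)$ is correct, but the argument breaks at exactly the step you flag, and your proposed fix cannot work.

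After multiplying by $V$, the $t$-part is $V\Delta_d[t/x]=\sum_i \pm V_i\,t_i/p_i$. Here $V_i=V/\prod_{l\ne i}|p_i-p_l|$ is the Vandermonde product of the other $d$ nodes. Integrality of $V\Delta_d[m]$ only says that this sum lies within $Vn/\prod_i p_i$ of an integer. That holds automatically and does not force the integer to be $0$. Knowing only $0\le t_i<3k^{\theta}$, the sum can have size $\asymp k^{\theta-1}\max_i V_i$.

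For any $d$ distinct odd integers, $V_i\ge 2^{d(d-1)/2}\prod_{s<d}s!=\exp\bigl((\tfrac12+o(1))d^2\log d\bigr)$. So you would need $d^2\log d\lesssim(1-\theta)\log k$. Meanwhile $n<\prod_i p_i/V$ forces $\log n\lesssim d\log k$. Hence even with optimally clustered primes, this single-cluster argument stops at roughly $\log n\asymp(\log k)^{3/2}/\sqrt{\log\log k}$, far short of $\log^2k/(20\log\log k)$. The underlying tension is that clustering the nodes makes $V$ small but makes the $\delta$-error large, since that error is $\delta$ divided by products of node gaps.

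The closing step fails independently. Once $n\gg k^{2-\theta}$, the quadratic Taylor term of $n/x$ over the window has size $\asymp nk^{2\theta-3}\gg k^{\theta-1}$. Comparing second divided differences at three well-separated primes, no polynomial $P$ of degree $\le 1$ can satisfy $0\le P(p)-n/p<3k^{\theta-1}$ at primes spread over the window. So ``$P$ is essentially constant'' is only available in the elementary range $n\lesssim k^{2-\theta}$.

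\textbf{The missing idea is a counting statement in place of ``every prime fails $\Rightarrow$ contradiction''.} The paper lets $K$ be the number of all integers $m\in I=(k,k+k^{\theta})$ with $\|n/m\|<k^{\theta-1}$, and proves $K=o(k^{\theta}/\log k)$. Since $I$ contains $\gg k^{\theta}/\log k$ primes, some prime $p\in I$ has $\|n/p\|\ge k^{\theta-1}$. Then $n \bmod p\in[k^{\theta},p-k^{\theta}]\subset[1,k]$, so $p$ divides the product.
\begin{itemize}
\item For $\frac12k^{2-\theta}<n\le k^2/\log^2k$, the bound on $K$ is elementary: each integer $h$ near $n/m$ accounts for few $m$.
\item For larger $n$, it comes from Konyagin's theorem on integer points near curves. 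This is applied to $f(x)=(-1)^rn/(k+x)$ with $\delta=k^{\theta-1}$ and $D_r=nr!/k^{r+1}$. One takes $r=2$ up to $\frac12k^{2+\theta}$, and beyond that $r$ minimal with $nr!\le k^{r+\theta}$; $\lambda$ is chosen to balance the terms.
\end{itemize}
That theorem allows many $m$ to have $\|n/m\|$ small and only bounds how many there are, which is exactly what is needed. Its parameters $r,\lambda$ manage the trade-off between $D_r$ and the $\delta$-error that defeats your fixed cluster.

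Finally, the extra room in $(k,k+3k^{\theta})$ is not where you expect it. It is used only for $n\le\frac12k^{2-\theta}$, where a multiple $(m-1)p$ of a prime $p\in\bigl(k+\frac{m}{m-1}k^{\theta},k+\frac{2m-1}{m-1}k^{\theta}\bigr)$, or $mp$ with $p\in I$, is shown to lie in $[n-k,n-1]$.
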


For the proof of this lower bound we turn to the similar problem of estimating the smallest $n > k+1$ such that the binomial coefficient $\binom{n}{k}$ has no prime factors smaller than or equal to $k$. This function was studied by Ecklund, Erd\H{o}s and Selfridge in~\cite{EES1974}, where they proved that there exists an absolute constant $c > 0$ such that the bounds $k^{1+c} < n < e^{(1+o(1))k}$ hold for all $k \in \mathbb{N}$. Estimating this quantity is now Erd\H{o}s Problem \#1095~\cite{ErdosProblems1095}, and the best known lower bound of $e^{c \log^2 k}$ (for some absolute constant $c > 0$) was proven by Konyagin~\cite{Konyagin1999}. It is Konyagin's proof that we will imitate in the coming sections.

\subsection*{Declaration of AI usage}
Even though this paper is completely human-written, the core idea of applying the same arguments used in~\cite{Konyagin1999} in order to prove the lower bound $n_k \ge e^{\frac{c \log^2 k}{\log \log k}}$ was conceived of by ChatGPT 5.5 Pro. The original paper that ChatGPT wrote is still available at~\cite{GPT}, for transparency's sake. Furthermore, in this same GitHub repository one can also find Lean formalizations of all theorems in this paper, including~\cite[Theorem 2]{Konyagin1999} that we apply. Apart from the external result on primes in short intervals from~\cite{BakerHarmanPintz2001}, these formalizations are fully self-contained. They were obtained by the automated theorem proving tool Aristotle from Harmonic~\cite{ari}.

\section{Small \texorpdfstring{$n$}{n}}
First assume $2k < n \le \frac{1}{2}k^{2-\theta}$, which implies that there exists an integer $m$ with $2 \le m \le \frac{1}{2}k^{1-\theta}$ such that $mk < n \le (m+1)k$. Then there are two cases, either $$mk < n < mk + mk^{\theta} \qquad \text{or} \qquad mk + mk^{\theta} \le n \le (m+1)k.$$ In the first case, let $p$ be a prime with $p \in \left(k + \frac{m}{m-1}k^{\theta}, k + \frac{2m-1}{m-1}k^{\theta} \right)$. We then claim that $(m-1)p$ occurs as a factor in $(n-k)\cdots(n-1)$. Indeed, on the one hand we have $$(m-1)p > (m-1)k + mk^{\theta} > n-k,$$ while on the other hand we have $$(m-1)p < (m-1)k + (2m-1)k^{\theta} < (m-1)k + k < n.$$ In the second case, let $p$ be a prime with $p \in I$. We then have $$mp > mk \ge n-k$$ and $$mp < mk + mk^{\theta} \le n,$$ so that in this case $mp$ occurs as a factor in $(n-k)\cdots(n-1)$.

\section{Medium \texorpdfstring{$n$}{n}}
The medium case is where $\frac{1}{2}k^{2-\theta} < n \le \frac{k^2}{\log^2 k}$. With $\|x\|$ the distance from a real number $x$ to the nearest integer, let $K$ be the number of integers $m \in I$ such that $\| \frac{n}{m} \| < \frac{1}{k^{1-\theta}}$. We then claim that it suffices to show $K = o\left(\frac{k^{\theta}}{\log k}\right)$. To see why this is indeed sufficient, we note that it implies that there exists a prime $p \in I$ with $\| \frac{n}{p} \| \ge \frac{1}{k^{1 - \theta}}$, by definition of $\theta$. And this in turn implies that reducing $n$ modulo $p$ gives $$n \bmod p \in [k^{\theta}, p - k^{\theta}] \subset [1, k],$$  so that $p$ divides $(n-k)\cdots(n-1)$. 

For all $m \in I$, we have $\frac{n}{m} \in \left(\frac{n}{k + k^{\theta}}, \frac{n}{k} \right) \subseteq J := \left(\left \lfloor \frac{n}{k + k^{\theta}} \right \rfloor, \left \lceil \frac{n}{k} \right \rceil \right)$ with $$\left|J \cap \mathbb{Z}\right| \le 3 + \frac{n}{k} - \frac{n}{k + k^{\theta}} = 3 + \frac{nk^{\theta}}{k(k + k^{\theta})} < 3 + \frac{n}{k^{2-\theta}}.$$ For every integer $h \in J$, if $\frac{n}{m} \in \left(h - \frac{1}{k^{1 - \theta}}, h + \frac{1}{k^{1 - \theta}} \right)$, then $$\left|m - \frac{n}{h} \right| = \frac{m}{h} \left|\frac{n}{m} - h \right| < \frac{2k}{\frac{n}{k}} \frac{1}{k^{1-\theta}} = \frac{2k^{1+\theta}}{n}.$$ This implies that for every integer $h \in J$ there are at most $1 + \frac{4k^{1+\theta}}{n}$ values of $m$ such that $\left| \frac{n}{m} - h \right| < \frac{1}{k^{1-\theta}}$, which shows 
\begin{align*}
K &< \left(1 + \frac{4k^{1+\theta}}{n}\right)|J \cap \mathbb{Z}| \\
&< 3 + \frac{12k^{1+\theta}}{n} + \frac{n}{k^{2 - \theta}} + 4k^{2\theta - 1}  \\
&< 3 + 28k^{2\theta-1} + \frac{\frac{k^2}{\log^2 k}}{k^{2 - \theta}} \\
&= o\left(\frac{k^{\theta}}{\log k}\right).
\end{align*}

\section{Intermezzo on a theorem by Konyagin}
To show that $K = o\left(\frac{k^{\theta}}{\log k}\right)$ also holds for all $n$ with $\frac{k^2}{\log^2 k} < n \le e^{\frac{\log^2 k}{20 \log \log k}}$ we will make use of the following general upper bound on $K$.

\begin{theorem} \label{konyaginappl}
Let $r$ be an integer with $2 \le r \le \frac{1}{2}k^{1 - \theta}$ and let $\lambda \ge 1$ be arbitrary. We then have 
\begin{equation} \label{maxPoints}
K \ll k^{\theta} \left(\left(\frac{nr!\lambda^r}{k^{r+1}} \right)^{\frac{1}{2r-1}} + \left(\frac{k^{r+\theta}}{nr! \lambda^r} \right)^{\frac{1}{r-1}} + \left(\frac{(r+1)\lambda}{k} \right)^{\frac{1}{2r}} \right) + r\lambda.
\end{equation}
\end{theorem}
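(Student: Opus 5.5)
The plan is to derive \cref{konyaginappl} directly from Konyagin's counting theorem~\cite[Theorem 2]{Konyagin1999}. As we use it, that result says the following. Let $f$ be $r$ times differentiable on an interval of length $N$, and suppose $f^{(r)}$ has constant sign with $|f^{(r)}|$ of size $\Lambda$, up to a bounded multiplicative factor. Then the number of integers $m$ in the interval with $\|f(m)\|<\delta$ is bounded by a sum of three power-type terms, one coming from each regime of $\Lambda$ and $\delta$. There is also an additive term $r\lambda$, which accounts for points clustered in short runs where a free parameter $\lambda\ge 1$ enters. I will take its exact form from~\cite{Konyagin1999} rather than re-derive it. The proof then consists of choosing $f$, $N$ and $\delta$, checking the hypotheses, and substituting.

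Concretely, take $f(x)=n/x$ on $I=(k,k+k^{\theta})$, so that $N=k^{\theta}$, and take $\delta=k^{\theta-1}$, matching the definition of $K$. Then
\[
f^{(r)}(x)=(-1)^r\,\frac{r!\,n}{x^{r+1}}.
\]
This has constant sign on $I$, and for $x\in I$ we have
\[
\frac{r!\,n}{(k+k^{\theta})^{r+1}}\le |f^{(r)}(x)|\le \frac{r!\,n}{k^{r+1}}.
\]
The ratio of the upper and lower bounds is $(1+k^{\theta-1})^{r+1}\le \exp\bigl((r+1)k^{\theta-1}\bigr)$. This is $O(1)$ exactly because $r\le \frac12 k^{1-\theta}$, which is why that hypothesis appears in the statement. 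So $|f^{(r)}|\asymp r!\,n/k^{r+1}=:\Lambda_r$ uniformly on $I$. If Konyagin's theorem also requires control of neighbouring derivatives such as $f^{(r+1)}$, the same computation gives $|f^{(r+1)}|\asymp (r+1)\Lambda_r/k$. I expect this is where the factor $(r+1)/k$ in the third term comes from.

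Substituting $N=k^{\theta}$, $\Lambda_r=r!\,n/k^{r+1}$ and $\delta$ into Konyagin's bound should produce the three terms in \eqref{maxPoints}:
\begin{itemize}
\item $k^\theta(\Lambda_r\lambda^r)^{1/(2r-1)}$, the large-derivative term;
\item $k^\theta\bigl(\delta/(\Lambda_r\lambda^r)\bigr)^{1/(r-1)}$, the small-derivative term; since $\delta\cdot k=k^{\theta}$, this gives $k^\theta\bigl(k^{r+\theta}/(nr!\lambda^r)\bigr)^{1/(r-1)}$;
\item $k^\theta\bigl((r+1)\lambda/k\bigr)^{1/(2r)}$, together with the additive $r\lambda$.
\end{itemize}
Implied constants must be absolute, not dependent on $r$. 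I will check this by tracking how the bounded-ratio constant $e^{(r+1)k^{\theta-1}}\le e^{1/2+o(1)}$ enters each term after taking $1/(2r-1)$-th or $1/(r-1)$-th roots, which can only shrink it.

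The main obstacle is matching the exact normalisation of Konyagin's hypotheses: where he places $\delta$, whether his interval length and derivative size are tied together, and how $\lambda$ is defined. If his theorem is stated only for $\delta$ of a specific size relative to $N$, or for intervals starting at $0$, I will first translate $x\mapsto x-k$. If needed I will also rescale $f$, replacing $f$ by $f/\delta$ is not allowed, so instead I would split $I$ into subintervals. Failing that, I would re-run his argument: divided differences of $f$ at $r+1$ nearby integer points with $\|f\|<\delta$ are either $0$ or at least roughly $1/\prod$ of gaps, which forces the gaps to be large. I expect the first route to work, and this check is where most of the care goes.
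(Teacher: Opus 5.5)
Your proposal is correct and is essentially the paper's proof. The paper applies Konyagin's Theorem 2 with $N=k^{\theta}$, $W=1$, $\delta=k^{\theta-1}$, $D_r=nr!/k^{r+1}$ and the translated function $f(x)=(-1)^r n/(k+x)$ on $[0,N]$; the only substantive check is $f^{(r)}\ge\frac12 D_r$, which uses $r\le\frac12k^{1-\theta}$ just as you anticipate, and $f^{(r+1)}$ is bounded directly.

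The paper includes the factor $(-1)^r$ so that $f^{(r)}$ is positive, which is harmless since $\|{-y}\|=\|y\|$. It bounds the ratio by $\bigl(1+\frac{1}{2r}\bigr)^{r+1}<2$, which holds for every $r\ge2$, whereas your $e^{1/2+o(1)}$ is below $2$ only for large $k$.
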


\begin{proof}
We apply~\cite[Theorem 2]{Konyagin1999}, with
\begin{align*}
N &:= k^{\theta}, \\
W &:= 1, \\
f(x) &:= \frac{(-1)^r n}{k+x}, \\
D_r &:= \frac{nr!}{k^{r+1}}, \\
\delta &:= \frac{1}{k^{1-\theta}}.
\end{align*}

The only non-trivial conditions one has to check are the required bounds in~\cite{Konyagin1999} on the iterated derivatives of $f$. But as $f^{(i)}(x) = \frac{(-1)^{r+i}ni!}{(k+x)^{i+1}}$, the upper bounds $f^{(r)}(x) \le D_r$ and $|f^{(r+1)}(x)| \le D_{r+1}$ quickly follow. To see why $f^{(r)}(x) \ge \frac{1}{2}D_r$ also holds, we use the assumption $r \le \frac{1}{2}k^{1 - \theta}$, as follows.
\begin{align*}
f^{(r)}(x) &= \frac{nr!}{(k+x)^{r+1}} \\
&\ge \frac{nr!}{\left(k\left(1 + \frac{1}{k^{1 - \theta}}\right)\right)^{r+1}} \\
&= \frac{D_r}{\left(1 + \frac{1}{k^{1 - \theta}}\right)^{r+1}} \\
&> \frac{D_r}{\left(1 + \frac{1}{2r}\right)^{r+1}} \\
&> \frac{1}{2}D_r. \qedhere
\end{align*}
\end{proof}

\section{Medium-large \texorpdfstring{$n$}{n}} \label{mediumlarge}
The medium-large case is where $\frac{k^2}{\log^2 k} < n \le \frac{1}{2}k^{2+\theta}$. We then apply Theorem \ref{konyaginappl} with $\lambda := \sqrt{\frac{k^{2 + \theta}}{2n}}\log k$ and $r := 2$, which gives
\begin{align*}
K &\ll k^{\theta} \left(\left(\frac{nr!\lambda^r}{k^{r+1}} \right)^{\frac{1}{2r-1}} + \left(\frac{k^{r+\theta}}{nr! \lambda^r} \right)^{\frac{1}{r-1}} + \left(\frac{(r+1)\lambda}{k} \right)^{\frac{1}{2r}} \right) + r\lambda \\
&= k^{\theta}  \left(k^{\frac{\theta - 1}{3}} (\log k)^\frac{2}{3} + \frac{1}{\log^2 k} + \left(\frac{(r+1)\lambda}{k} \right)^{\frac{1}{2r}} \right) + r\lambda \\
&\ll k^{\theta} \left(k^{\frac{\theta - 1}{3}} (\log k)^\frac{2}{3}  + \frac{1}{\log^2 k} + k^{\frac{\theta - 2}{8}} \sqrt{\log k} \right) + k^{\frac{\theta}{2}} \log^{2} k \\
&= o\left(\frac{k^{\theta}}{\log k} \right).
\end{align*}

\section{Large \texorpdfstring{$n$}{n}}
Finally, we deal with the case of large $n$. More precisely, let us take $c := \frac{1}{20}$ and assume $\frac{1}{2}k^{2 + \theta} < n \le e^{\frac{c \log^2 k}{\log \log k}}$. With $r$ defined as the smallest positive integer such that $nr! \le k^{r+\theta}$, we have $r \ge 3$ by the assumption on $n$. Furthermore, in general we see that such an $r$ does indeed exist and $r \le \frac{1}{2}k^{1 - \theta}$, because with $r_0 := \left \lceil \frac{2c \log k}{\log \log k} \right \rceil$ we have $r_0 \le \frac{1}{2}k^{1 - \theta}$ and
\begin{align*}
k^{r_0+\theta} &> k^{\frac{2c \log k}{\log \log k}} \\
&\ge ne^{\frac{c \log^2 k}{\log \log k}} \\
&> ne^{r_0 \log r_0} \\
&> nr_0!.	
\end{align*}

We further define $\lambda := \left(\frac{k^{r+1-(1-\theta)\frac{2r-1}{3r-2}}}{nr!} \right)^{\frac{1}{r}},$ which is larger than $1$ by definition of $r$. Hence, Theorem \ref{konyaginappl} applies, and with this definition of $\lambda$, the first two terms within the brackets in Equation \eqref{maxPoints} are both equal to $k^{\frac{\theta-1}{3r-2}}.$ Since $3r-2 < \frac{7c \log k}{\log \log k}$ and $\theta < \frac{3}{5}$, this is smaller than $$k^{\frac{(\theta-1)\log \log k}{7c \log k}} = (\log k)^{\frac{\theta-1}{7c}} = o\left(\frac{1}{\log k} \right).$$ As for the third term, thanks to the minimality of $r$ we know $n(r-1)! > k^{r-1+\theta}.$ This gives $$\lambda = \left(\frac{k^{r+1-(1-\theta)\frac{2r-1}{3r-2}}}{nr!} \right)^{\frac{1}{r}} < k^{\frac{2-\theta-(1-\theta)\frac{2r-1}{3r-2}}{r}} < k^{\frac{2-\theta}{r}}.$$ Hence, as $r+1 < k^{\frac{\theta}{r}}$ if $k$ is large enough, the third term is bounded by $$\left(\frac{(r+1)\lambda}{k} \right)^{\frac{1}{2r}} < k^{\frac{1}{r^2} - \frac{1}{2r}} \le k^{\frac{-1}{6r}} < (\log k)^{\frac{-1}{13c}} = o\left(\frac{1}{\log k} \right).$$ For the final additive term of $\lambda r$ we also apply the minimality of $r$, which implies $$\lambda r < k^{\frac{2-\theta-(1-\theta)\frac{2r-1}{3r-2}}{r}} r \le k^{\frac{9 - 2\theta}{21}} r_0 =  o\left(\frac{k^{\theta}}{\log k} \right),$$ by the assumption $\theta > \frac{2}{5} > \frac{9}{23}$ and the fact that $\frac{2-\theta-(1-\theta)\frac{2r-1}{3r-2}}{r}$ as a function over $r \in [3, r_0]$ is maximized at $r = 3$. As the full range of integers $n$ with $2k < n \le e^{\frac{\log^2 k}{20 \log \log k}}$ is now dealt with, this finishes the proof of Theorem \ref{main}.

\end{document}